\numberwithin{equation}{section}
\newcommand{\Z}{{\mathbb Z}}
\newtheorem{thm}{Theorem}
\newtheorem{lemma}{Lemma}
\title{Weighted generalized Fibonacci sums\thanks{AMS Classification: 11B37, 11B39, 65B10}}
\author[]{Kunle Adegoke \thanks{adegoke00@gmail.com}}
\affil{Department of Physics and Engineering Physics, \mbox{Obafemi Awolowo University}, 220005 Ile-Ife, Nigeria}
\begin{document}

\date{}

\maketitle

\begin{abstract}
\noindent We derive weighted sums, including binomial and double binomial sums, for the generalized Fibonacci sequence $\{G_m\}$ where for $m\ge 2$, $G_m=G_{m-1}+G_{m-2}$ with initial values $G_0$ and $G_1$.
\end{abstract}

\section{Introduction}
The generalized Fibonacci sequence $\{G_m\}$ is given, for $m\ge 2$, by $G_m=G_{m-1}+G_{m-2}$ with initial values $G_0$ and $G_1$. In particular, when $G_0 = 0$ and $G_1= 1$, we have the Fibonacci sequence $\{F_m\}$, and when $G_0= 2$ and $G_1= 1$, we have the sequence of Lucas numbers, $\{L_m\}$. Extension to negative index is provided through $G_{-m}=(-1)^m(F_{m+1}G_0-F_mG_1)$, Vajda~\cite{vajda}, identity~(9). Whenever an identity contains numbers from two generalized Fibonacci sequences, the numbers from one will be denoted by $G$ while the numbers from the other will be denoted by $H$, with the appropriate subscripts. 

\bigskip

The following identities connecting the Fibonacci numbers, the Lucas numbers and the generalized Fibonacci numbers are well known, Vajda~\cite{vajda}, identities~(8),~(10a),~(10b),~(18): 
\begin{equation}\label{eq.t3slb1g}
G_{m + n } = F_{n-1}G_m +F_nG_{m+1}\,,
\end{equation}
\begin{equation}\label{eq.l5rjizg}
G_{m + n} + (-1)^nG_{m-n} = L_nG_m\,,
\end{equation}
\begin{equation}\label{eq.bazmec8}
G_{m + n}  - ( - 1)^n G_{m - n}  = F_n (G_{m - 1}  + G_{m + 1} )\,,
\end{equation}
\begin{equation}\label{eq.llmg2cf}
G_{n + r} H_{m + n}  - G_n H_{m + n + r}  = ( - 1)^n (G_r H_m  - G_0 H_{m + r} )\,,
\end{equation}
and Howard~\cite{howard03}, Theorem~3.1, Corrolary~3.5:
\begin{equation}\label{eq.tpyn1ql}
( - 1)^rF_nG_m  = F_{n + r}G_{m + r}   -  F_r G_{m + n + r}\,.
\end{equation}
Note, by the way, that identity~\eqref{eq.t3slb1g} can be obtained from identity~\eqref{eq.tpyn1ql} through the set of transformations $m\to m+1$, $n\to-n$ and $r\to n-1$. 

\bigskip

Our purpose is to discover the weighted summation identities associated with the identities~\eqref{eq.bazmec8}, \eqref{eq.llmg2cf} and \eqref{eq.tpyn1ql} since those associated with identities~\eqref{eq.t3slb1g} and \eqref{eq.l5rjizg} are already contained as special cases, $p=1=-q$, of the results obtained in an earlier paper~\cite{adegoke18}.
\section{Weighted sums}
\begin{lemma}[\cite{adegoke18}, Lemma 2]\label{lem.s9jfs7n}
Let $\{X_m\}$ be any arbitrary sequence, where $X_m$, $m\in\Z$, satisfies a second order recurrence relation $X_m=f_1X_{m-a}+f_2X_{m-b}$, where $f_1$ and $f_2$ are arbitrary non-vanishing complex functions, not dependent on $m$, and $a$ and $b$ are integers. Then,
\begin{equation}\label{eq.awbhgnm}
f_2\sum_{j = 0}^k {f_1^j X_{m - b  - a j} }  = X_m  - f_1^{k + 1} X_{m - (k + 1)a }\,,
\end{equation}
\begin{equation}\label{eq.jjikwds}
f_1\sum_{j = 0}^k {f_2^j X_{m - a  - b j} }  = X_m  - f_2^{k + 1} X_{m - (k + 1)b }\,,
\end{equation}
\begin{equation}
\sum_{j = 0}^k {\frac{{X_{m + a  - (b  - a )j} }}{{( - f_1/f_2)^j }}}  = f_1X_m  + \frac{f_2}{{( - f_1/f_2)^k }}X_{m - (k + 1)(b  - a )}
\end{equation}
and
\begin{equation}
\sum_{j = 0}^k {\frac{{X_{m + b  - (a  - b )j} }}{{( - f_2/f_1)^j }}}  = f_2X_m  + \frac{f_1}{{( - f_2/f_1)^k }}X_{m - (k + 1)(a  - b )}\,,
\end{equation}
for $k$ any integer.
\end{lemma}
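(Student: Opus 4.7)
All four identities are to be handled by a single mechanism: rewrite the recurrence in a form that forces either direct telescoping or the cancellation of all interior terms, leaving only the two boundary contributions.

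For \eqref{eq.awbhgnm}, the plan is to rearrange the recurrence as $f_2X_{m-b}=X_m-f_1X_{m-a}$, shift $m\to m-aj$, multiply by $f_1^j$, and sum from $j=0$ to $k$. The right-hand side is then a telescoping series that collapses to $X_m-f_1^{k+1}X_{m-(k+1)a}$, which is the claim. Identity \eqref{eq.jjikwds} is immediate from the symmetry $a\leftrightarrow b$, $f_1\leftrightarrow f_2$ of the recurrence hypothesis.

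For the third identity, I would introduce $n_j:=m+a-(b-a)j$ and $Y_j:=X_{m-(b-a)j}$, and note that $n_j-a=m-(b-a)j$ while $n_j-b=m-(b-a)(j+1)$. Applying the recurrence at index $n_j$ therefore yields
\[
X_{n_j}=f_1Y_j+f_2Y_{j+1}.
\]
Substituting this into the left-hand side, splitting the sum into two pieces and reindexing the $Y_{j+1}$-piece, every interior $Y_i$ with $1\le i\le k$ picks up a factor $f_1(-f_2/f_1)+f_2=0$. Only the endpoints $f_1Y_0=f_1X_m$ and $f_2(-f_2/f_1)^kY_{k+1}=f_2(-f_2/f_1)^kX_{m-(k+1)(b-a)}$ survive, which is exactly the right-hand side. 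The fourth identity then follows by the same argument with $a\leftrightarrow b$ and $f_1\leftrightarrow f_2$.

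The one non-routine step is spotting the substitution $X_{n_j}=f_1Y_j+f_2Y_{j+1}$ that turns the third (and by symmetry the fourth) identity into a two-term telescoping sum in $Y$. Once that observation is made, everything reduces to elementary bookkeeping, and no auxiliary results beyond the recurrence itself are required.
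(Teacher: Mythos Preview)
Your argument is correct. The telescoping for \eqref{eq.awbhgnm} and \eqref{eq.jjikwds} is the standard one, and for the third identity your key observation that $n_j-a=m-(b-a)j$ while $n_j-b=m-(b-a)(j+1)$, so that $X_{n_j}=f_1Y_j+f_2Y_{j+1}$, is exactly what is needed. After splitting and reindexing, the interior coefficient of $Y_i$ is $(-f_2/f_1)^{i-1}\bigl(f_1(-f_2/f_1)+f_2\bigr)=0$, leaving only the two boundary terms; the fourth identity then follows by the $a\leftrightarrow b$, $f_1\leftrightarrow f_2$ symmetry as you say. (A minor remark: the phrase ``for $k$ any integer'' in the statement should really be read as $k\ge 0$, since otherwise the finite sum $\sum_{j=0}^k$ is empty; your proof is for non-negative $k$, which is all that is used later in the paper.)

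As for comparison with the paper: there is nothing to compare. This lemma is not proved in the present paper at all; it is quoted verbatim from \cite{adegoke18}, Lemma~2, and used as a black box in the proof of Theorem~\ref{thm.mjeylda}. Your self-contained telescoping argument therefore supplies what the paper leaves to the reference.
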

\begin{thm}\label{thm.mjeylda}
The following identities hold for integers $m$, $n$, $r$ and $k$:
\begin{equation}
F_r\sum_{j = 0}^k {( - 1)^{rj} \left( {\frac{{F_{n + r} }}{{F_n }}} \right)^j G_{m + n + r + rj} }  = ( - 1)^{kr} F_n\left(\frac{{F_{n+r} }}{{F_n }}\right)^{k+1}G_{m + (k + 1)r}  - ( - 1)^r F_nG_m,\quad n\ne 0\,,
\end{equation}
\begin{equation}
F_{n + r} \sum_{j = 0}^k {( - 1)^{(r + 1)j} \left( {\frac{{F_r }}{{F_n }}} \right)^j G_{m + r + (n + r)j} }  = ( - 1)^r F_n G_m  + ( - 1)^{(r + 1)k} F_n \left( {\frac{{F_r }}{{F_n }}} \right)^{k + 1} G_{m + (k + 1)(n + r)},\quad n\ne 0
\end{equation}
and
\begin{equation}
\sum_{j = 0}^k {\left( {\frac{{F_r }}{{F_{n + r} }}} \right)^j G_{m - r + nj} }  = ( - 1)^r \frac{{F_{n + r} }}{{F_n }}G_m  - ( - 1)^r \frac{{F_r }}{{F_n }}\left( {\frac{{F_r }}{{F_{n + r} }}} \right)^k G_{m + (k + 1)n},\quad n+r\ne 0\,.
\end{equation}

\end{thm}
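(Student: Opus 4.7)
The plan is to view Howard's identity~\eqref{eq.tpyn1ql} as a two-term linear recurrence for $X_m := G_m$ in the index $m$ and then invoke Lemma~\ref{lem.s9jfs7n}; the three asserted identities will correspond to three of the four conclusions of that lemma.

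First, rearranging~\eqref{eq.tpyn1ql} (using $F_n \ne 0$) gives
\begin{equation*}
G_m \;=\; (-1)^r\frac{F_{n+r}}{F_n}\,G_{m+r} \;-\; (-1)^r\frac{F_r}{F_n}\,G_{m+n+r},
\end{equation*}
which is precisely the shape $X_m = f_1 X_{m-a} + f_2 X_{m-b}$ featured in Lemma~\ref{lem.s9jfs7n}, under the identifications $a = -r$, $b = -(n+r)$, $f_1 = (-1)^r F_{n+r}/F_n$, and $f_2 = (-1)^{r+1} F_r/F_n$.

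Next I would feed these parameters into the first three conclusions of Lemma~\ref{lem.s9jfs7n}. In~\eqref{eq.awbhgnm} the index $m - b - aj$ becomes $m + n + r + rj$ and the boundary term sits at $G_{m+(k+1)r}$; after clearing the leading fraction by multiplying through by $-(-1)^r F_n$, the sign powers collapse to $(-1)^{rj}$ inside the sum and $(-1)^{rk}$ on the boundary, yielding the first identity. Formula~\eqref{eq.jjikwds}, multiplied instead by $(-1)^r F_n$, produces the second identity by the same mechanism. For the third identity I would exploit the pleasant cancellation $-f_1/f_2 = F_{n+r}/F_r$, so that the third formula of the lemma directly yields the stated expression; the extra hypothesis $F_{n+r} \ne 0$ (equivalent to $n+r\ne 0$) is what is needed to make sense of the ratio $F_r/F_{n+r}$ appearing in the summand.

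The only real obstacle is sign bookkeeping. For instance, the raw derivation of the second identity throws up the exponent $r + (r+1)(k+1)$ on a $-1$, which must be reduced modulo $2$ to the form $(r+1)k + 1$; this is exactly what converts the minus sign coming from~\eqref{eq.jjikwds} into the claimed $+(-1)^{(r+1)k}$ coefficient on the boundary term. Once the tuple $(a,b,f_1,f_2)$ has been pinned down, nothing beyond careful tracking of powers of $-1$ and the specified non-vanishing conditions on $F_n$ and $F_{n+r}$ is required.
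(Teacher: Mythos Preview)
Your proposal is correct and matches the paper's own proof essentially line for line: the paper also rewrites identity~\eqref{eq.tpyn1ql} as a second-order recurrence with $a=-r$, $b=-(n+r)$, $f_1=(-1)^rF_{n+r}/F_n$, $f_2=-(-1)^rF_r/F_n$, and then invokes Lemma~\ref{lem.s9jfs7n} with $X_m=G_m$. Your additional remarks on the sign bookkeeping (e.g.\ reducing $r+(r+1)(k+1)\equiv(r+1)k+1\pmod 2$) simply make explicit what the paper leaves to the reader.
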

\begin{proof}
In identity~\eqref{eq.tpyn1ql}, identify $f_1=(-1)^rF_{n+r}/F_n$, $f_2=-(-1)^rF_r/F_n$, $a=-r$, $b=-n-r$ and use these in Lemma~\ref{lem.s9jfs7n} with $X_m=G_m $.
\end{proof}
\section{Weighted binomial sums}
The following identities of Lucas~\cite{long88}:
\begin{equation}
\sum_{j = 0}^k {\binom kjF_{s + j} }  = F_{s + 2k} \mbox{ and } \sum_{j = 0}^k {\binom kjL_{s + j} }  = L_{s + 2k}\,,
\end{equation}
are the Fibonacci and Lucas specializations of the generalized Fibonacci sum:
\begin{equation}
\sum_{j = 0}^k {\binom kjG_{s + j} }  = G_{s + 2k}\,,
\end{equation}
which is itself a particular case of the following identity (Theorem~\ref{thm.sihtl16}, identity~\eqref{eq.m46aoy9}):
\[
\sum_{j = 0}^k {( - 1)^{rj} \binom kj\left( {\frac{{F_r }}{{F_n }}} \right)^j G_{m - rk + (n + r)j} }  = ( - 1)^{rk} \left( {\frac{{F_{n + r} }}{{F_n }}} \right)^k G_m ,\quad n\ne 0\,,
\]
being an evaluation at $r=2$, $n=-1$ and $m=s+2k$.

\bigskip

In this section, we will derive three general binomial summation identities for the $G-$sequence. Binomial sums for Fibonacci or Fibonacci-like sequences are also discussed in references~\cite{khan11, filliponi95, layman77, hoggatt64}. First we state a required Lemma.
\begin{lemma}[\cite{adegoke18}, Lemma 3]\label{lem.binomial}
Let $\{X_m\}$ be any arbitrary sequence. Let $X_m$, $m\in\Z$, satisfy a second order recurrence relation $X_m=f_1X_{m-a}+f_2X_{m-b}$, where $f_1$ and $f_2$ are non-vanishing complex functions, not dependent on $m$, and $a$ and $b$ are integers. Then,
\begin{equation}\label{eq.atln4oc}
\sum_{j = 0}^k {\binom kj\left( {\frac{f_2}{f_1}} \right)^j X_{m - ak  + (a  - b )j} }  = \frac{{X_m }}{{f_1{}^k }}\,,
\end{equation}
\begin{equation}\label{eq.dzwka2l}
\sum_{j = 0}^k {(-f_2)^j\binom kjX_{m + ak - bj}}  = f_1{}^k X_m
\end{equation}
and
\begin{equation}\label{eq.muqj9xn}
\sum_{j = 0}^k {(-f_1)^j\binom kjX_{m + bk - aj}}  = f_2{}^k X_m\,,
\end{equation}
for $k$ a non-negative integer.

\end{lemma}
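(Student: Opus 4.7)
The plan is to prove each of the three identities by induction on the nonnegative integer $k$, driven by the recurrence $X_m = f_1 X_{m-a} + f_2 X_{m-b}$ (or an equivalent rearrangement). The base case $k=0$ is immediate: every sum collapses to its $j=0$ term and reduces to $X_m$ on both sides.

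For (\ref{eq.atln4oc}), I would rewrite the recurrence as $X_m/f_1 = X_{m-a} + (f_2/f_1)\,X_{m-b}$. In the inductive step, I take the $(k+1)$-sum on the left, split its binomial coefficients via Pascal's identity $\binom{k+1}{j}=\binom{k}{j}+\binom{k}{j-1}$, reindex the resulting second piece, and recognize the two sums as the inductive hypothesis applied at the shifted arguments $m-a$ and $m-b$. Combining them with weights $1$ and $f_2/f_1$ yields $\bigl(X_{m-a} + (f_2/f_1)\,X_{m-b}\bigr)/f_1^k$, which the recurrence collapses to $X_m/f_1^{k+1}$.

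Identities (\ref{eq.dzwka2l}) and (\ref{eq.muqj9xn}) go through by the same Pascal-splitting argument after solving the recurrence in the form $f_1 X_m = X_{m+a} - f_2 X_{m+a-b}$ and $f_2 X_m = X_{m+b} - f_1 X_{m+b-a}$ respectively; the minus signs produced by these rearrangements account for the factors $(-f_2)^j$ and $(-f_1)^j$ in the sums. Equivalently and more conceptually, since $f_1,f_2$ do not depend on $m$, the shift operators $E_c\colon X_m\mapsto X_{m+c}$ commute, and the three identities are precisely the binomial expansions of $(E_{-a}+(f_2/f_1)E_{-b})^k$, $(E_{+a}-f_2 E_{+a-b})^k$, and $(E_{+b}-f_1 E_{+b-a})^k$ applied to $X_m$, which gives a one-line derivation once the operator framework is set up.

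The main obstacle is purely bookkeeping — verifying that the shift $m-ak+(a-b)j$ (respectively $m+ak-bj$ and $m+bk-aj$) matches up cleanly after the Pascal split, and that the powers of $f_1$ and $f_2$ land correctly on the right-hand side. The non-vanishing hypothesis on $f_1,f_2$ enters only to legitimize the divisions by $f_1$ and the factor $f_2/f_1$ appearing in (\ref{eq.atln4oc}).
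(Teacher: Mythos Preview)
Your argument is correct: the Pascal-split induction goes through exactly as you describe (the index shifts $m\to m-a$ and $m\to m-b$ in the two pieces of the $(k+1)$-sum do line up with the inductive hypothesis, and the recurrence recombines them), and the shift-operator reformulation is an equally valid one-line alternative. Note, however, that the paper does not supply its own proof of this lemma at all; it is quoted verbatim as Lemma~3 of \cite{adegoke18} and used as a black box. So there is no ``paper's proof'' to compare against here---your write-up would stand on its own as a self-contained justification, and either the inductive or the operator version would be appropriate to include.
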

\begin{thm}\label{thm.sihtl16}
The following identities hold for integers $n$, $m$ and $r$ and non-negative integer~$k$:
\begin{equation}\label{eq.rmtxz1n}
\sum_{j = 0}^k {( - 1)^j \binom kj\left( {\frac{{F_r }}{{F_{r + n} }}} \right)^j G_{m + rk + nj} }  = ( - 1)^{rk} \left( {\frac{{F_n }}{{F_{r + n} }}} \right)^k G_m ,\quad n+r\ne 0\,,
\end{equation}
\begin{equation}\label{eq.m46aoy9}
\sum_{j = 0}^k {( - 1)^{rj} \binom kj\left( {\frac{{F_r }}{{F_n }}} \right)^j G_{m - rk + (n + r)j} }  = ( - 1)^{rk} \left( {\frac{{F_{n + r} }}{{F_n }}} \right)^k G_m ,\quad n\ne 0\,,
\end{equation}
and
\begin{equation}\label{eq.f1t4x43}
\sum_{j = 0}^k {( - 1)^j ( - 1)^{rj} \binom kj\left( {\frac{{F_{n + r} }}{{F_n }}} \right)^j G_{m - (n + r)k + rj} }  = ( - 1)^k ( - 1)^{rk} \left( {\frac{{F_r }}{{F_n }}} \right)^k G_m ,\quad n\ne 0\,.
\end{equation}

\end{thm}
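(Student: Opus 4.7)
The plan is to parallel the proof of Theorem~\ref{thm.mjeylda}: view Howard's identity~\eqref{eq.tpyn1ql} as a two-term recurrence for the $G$-sequence and feed the data to the three conclusions of Lemma~\ref{lem.binomial}.

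First I would rearrange~\eqref{eq.tpyn1ql}, dividing through by $(-1)^r F_n$, into the canonical form
\begin{equation*}
G_m = \frac{(-1)^r F_{n+r}}{F_n}\, G_{m+r} \;-\; \frac{(-1)^r F_r}{F_n}\, G_{m+n+r}
\end{equation*}
required by Lemma~\ref{lem.binomial}, reading off $f_1 = (-1)^r F_{n+r}/F_n$, $f_2 = -(-1)^r F_r/F_n$, $a = -r$ and $b = -n-r$. These are exactly the assignments already used in the proof of Theorem~\ref{thm.mjeylda}, so no new identity is needed; only a fresh lemma is applied.

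Then I would substitute these data, together with $X_m = G_m$, into each of the three conclusions of Lemma~\ref{lem.binomial}. Using $a - b = n$, $f_2/f_1 = -F_r/F_{n+r}$, and $1/f_1^k = (-1)^{rk}(F_n/F_{n+r})^k$, identity~\eqref{eq.atln4oc} produces~\eqref{eq.rmtxz1n}. Using $-f_2 = (-1)^r F_r/F_n$ together with $f_1^k = (-1)^{rk}(F_{n+r}/F_n)^k$ and the shift $m + ak - bj = m - rk + (n+r)j$, identity~\eqref{eq.dzwka2l} produces~\eqref{eq.m46aoy9}. Using $-f_1 = (-1)^{r+1}F_{n+r}/F_n$ together with $f_2^k = (-1)^{k+rk}(F_r/F_n)^k$ and the shift $m + bk - aj = m - (n+r)k + rj$, identity~\eqref{eq.muqj9xn} produces~\eqref{eq.f1t4x43}.

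The only real obstacle I anticipate is clerical: carefully tracking the factors of $(-1)^r$, $(-1)^{rj}$ and $(-1)^{(r+1)j}$ that arise when $f_1$ and $f_2$ are raised to the $j$-th or $k$-th power, and checking that the generic shifts $m - ak + (a-b)j$, $m + ak - bj$ and $m + bk - aj$ of Lemma~\ref{lem.binomial} reduce to those advertised in the theorem. The stated hypotheses $n+r \ne 0$ in~\eqref{eq.rmtxz1n} and $n \ne 0$ in~\eqref{eq.m46aoy9} and~\eqref{eq.f1t4x43} arise precisely to keep the ratios $F_n/F_{n+r}$ and $F_{n+r}/F_n$ finite, and they coincide with the nonvanishing conditions on $f_1$ and $f_2$ demanded by Lemma~\ref{lem.binomial}.
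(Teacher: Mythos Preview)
Your proposal is correct and follows exactly the paper's own argument: it too instructs the reader to reuse the assignments $f_1=(-1)^rF_{n+r}/F_n$, $f_2=-(-1)^rF_r/F_n$, $a=-r$, $b=-n-r$ from the proof of Theorem~\ref{thm.mjeylda} and to feed them into the three conclusions of Lemma~\ref{lem.binomial} with $X_m=G_m$. You have in fact supplied more detail than the paper, explicitly verifying the index shifts and the sign bookkeeping that the paper leaves to the reader.
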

\begin{proof}
Use, in Lemma~\ref{lem.binomial}, the values of $a$, $b$, $f_1$ and $f_2$ found in the proof of Theorem~\ref{thm.mjeylda}, with $X_m=G_m $.
\end{proof}
A particular instance of identity~\eqref{eq.rmtxz1n} is
\begin{equation}
\sum_{j = 0}^k {( - 1)^j \binom kj\left( {\frac{{F_r }}{{F_{r + n} }}} \right)^j G_{nj} }  = \left( {\frac{{F_n }}{{F_{r + n} }}} \right)^k (F_{rk+1}G_{0}-F_{rk}G_1)\,,
\end{equation}
which at $r=n$ gives
\begin{equation}
\sum_{j = 0}^k {( - 1)^j \binom kj\frac{{G_{nj} }}{{L_n{}^j }}}  = \frac{F_{nk+1}G_{0}-F_{nk}G_1}{L_n{}^k}\,,
\end{equation}
since $F_{2n}=L_nF_n$. In particular, we have
\begin{equation}
\sum_{j = 0}^k {( - 1)^j \binom kj G_j}  = F_{k+1}G_{0}-F_{k}G_1\,.
\end{equation}

\bigskip

Similarly, from identities~\eqref{eq.m46aoy9} and \eqref{eq.f1t4x43}, we have
\begin{equation}\label{eq.mer4bue}
\sum_{j = 0}^k {( - 1)^{rj} \binom kj\left( {\frac{{F_r }}{{F_n }}} \right)^j G_{(n + r)j} }  = ( - 1)^{rk} \left( {\frac{{F_{n + r} }}{{F_n }}} \right)^k G_{rk} ,\quad n\ne 0\,,
\end{equation}
and
\begin{equation}\label{eq.kq31wfz}
\sum_{j = 0}^k {( - 1)^j ( - 1)^{rj} \binom kj\left( {\frac{{F_{n + r} }}{{F_n }}} \right)^j G_{rj} }  = ( - 1)^k ( - 1)^{rk} \left( {\frac{{F_r }}{{F_n }}} \right)^k G_{(n+r)k} ,\quad n\ne 0\,,
\end{equation}
which at $r=n$ give
\begin{equation}
\sum_{j = 0}^k {( - 1)^{nj} \binom kjG_{2nj} }  = ( - 1)^{nk} L_n{}^k G_{nk}
\end{equation}
and
\begin{equation}
\sum_{j = 0}^k {( - 1)^j ( - 1)^{nj} \binom kjL_n{}^j G_{nj} }  = ( - 1)^k ( - 1)^{nk} G_{2nk}\,.
\end{equation}
In particular, we have
\begin{equation}
\sum_{j = 0}^k {( - 1)^j \binom kjG_{2j} }  = ( - 1)^k G_k
\end{equation}
and
\begin{equation}
\sum_{j = 0}^k {\binom kjG_j }  = G_{2k}\,.
\end{equation}
\section{Weighted double binomial sums}
\begin{lemma}[\cite{adegoke18c}, Lemma 5]\label{lem.h2de9i7}
Let $\{X_m\}$ be any arbitrary sequence, $X_m$ satisfying a third order recurrence relation $X_m=f_1X_{m-a}+f_2X_{m-b}+f_3X_{m-c}$, where $f_1$, $f_2$ and $f_3$ are arbitrary non-vanishing functions and $a$, $b$ and $c$ are integers. Let $k$ be a non-negative integer. Then, the following identities hold:
\begin{equation}\label{eq.wgx2r2f}
\sum_{j = 0}^k {\sum_{s = 0}^j {\binom kj\binom js\left( {\frac{{f_2 }}{{f_3 }}} \right)^j\left( {\frac{{f_1 }}{{f_2 }}} \right)^s  X_{m - ck + (c - b)j + (b - a)s} } }  = \frac{{X_m }}{{f_3{}^k }}\,,
\end{equation}
\begin{equation}\label{eq.sm9bygb}
\sum_{j = 0}^k {\sum_{s = 0}^j {\binom kj\binom js\left( {\frac{{f_3 }}{{f_2 }}} \right)^j\left( {\frac{{f_1 }}{{f_3 }}} \right)^s  X_{m - bk + (b - c)j + (c - a)s} } }  = \frac{{X_m }}{{f_2{}^k }}\,,
\end{equation}
\begin{equation}
\sum_{j = 0}^k {\sum_{s = 0}^j {\binom kj\binom js\left( {\frac{{f_3 }}{{f_1 }}} \right)^j\left( {\frac{{f_2 }}{{f_3 }}} \right)^s X_{m - ak + (a - c)j + (c - b)s} } }  = \frac{{X_m }}{{f_1{}^k }}\,,
\end{equation}
\begin{equation}
\sum_{j = 0}^k {\sum_{s = 0}^j {\binom kj\binom js\left( {\frac{{f_2 }}{{f_3 }}} \right)^j\left( {-\frac{{1 }}{{f_2 }}} \right)^s X_{m - (c-a)k + (c - b)j + bs } } }  = \left(-\frac {f_1}{f_3}\right)^kX_m\,,
\end{equation}
\begin{equation}
\sum_{j = 0}^k {\sum_{s = 0}^j {\binom kj\binom js\left( {\frac{{f_1 }}{{f_3 }}} \right)^j\left( {-\frac{{1 }}{{f_1 }}} \right)^s X_{m - (c-b)k + (c - a)j + as } } }  = \left(-\frac {f_2}{f_3}\right)^kX_m
\end{equation}
and
\begin{equation}\label{eq.o7540wl}
\sum_{j = 0}^k {\sum_{s = 0}^j {\binom kj\binom js\left( {\frac{{f_1 }}{{f_2 }}} \right)^j\left( {-\frac{{1 }}{{f_1 }}} \right)^s X_{m  - (b-c)k + (b - a)j + as} } }  = \left(-\frac {f_3}{f_2}\right)^kX_m\,.
\end{equation}

\end{lemma}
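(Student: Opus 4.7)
The plan is to iterate the recurrence $k$ times and then re-index the resulting trinomial sum as a double binomial sum. For identity~\eqref{eq.wgx2r2f} I would divide the recurrence by $f_3$ to write
\[
\frac{X_m}{f_3} = X_{m-c} + \frac{f_2}{f_3}\,X_{m-b} + \frac{f_1}{f_3}\,X_{m-a},
\]
so that iterating $k$ times and applying the multinomial theorem yields
\[
\frac{X_m}{f_3^{k}} = \sum_{\alpha+\beta+\gamma=k}\binom{k}{\alpha,\beta,\gamma}\left(\frac{f_1}{f_3}\right)^{\alpha}\left(\frac{f_2}{f_3}\right)^{\beta} X_{m-a\alpha-b\beta-c\gamma}.
\]
The substitution $\alpha=s$, $\beta=j-s$, $\gamma=k-j$ converts the trinomial coefficient into $\binom{k}{j}\binom{j}{s}$, turns the subscript of $X$ into $m-ck+(c-b)j+(b-a)s$, and collapses the weights to $(f_2/f_3)^{j}(f_1/f_2)^{s}$, which is exactly~\eqref{eq.wgx2r2f}.

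Identity~\eqref{eq.sm9bygb} and the third identity then follow by the same template, dividing the recurrence by $f_2$ or by $f_1$ in place of $f_3$ and permuting the roles of the pairs $(a,f_1)$, $(b,f_2)$, $(c,f_3)$ accordingly. For the last three identities, including~\eqref{eq.o7540wl}, the starting point is instead a shifted rearrangement of the recurrence that isolates one of the $X_{m-\bullet}$ terms on the left with a negative coefficient. For the identity whose right-hand side is $(-f_1/f_3)^k X_m$, for example, I would substitute $m\to m+a$ in the recurrence, solve for $-(f_1/f_3)X_m$, and obtain
\[
-\frac{f_1}{f_3}\,X_m = -\frac{1}{f_3}\,X_{m+a} + \frac{f_2}{f_3}\,X_{m+a-b} + X_{m+a-c};
\]
the $k$-fold iteration together with the same substitution $(\alpha,\beta,\gamma)=(s,j-s,k-j)$ produces the stated double binomial form, the sign factor $(-1)^s$ arising from the $-1/f_3$ weight. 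The remaining two rearranged identities are handled by analogous manipulations that isolate $-f_2 X_m$ or $-f_3 X_m$ instead.

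The main obstacle is purely bookkeeping rather than conceptual: for each of the six identities one has to verify that the chosen rearrangement of the recurrence, together with the substitution $(\alpha,\beta,\gamma)\leftrightarrow(s,j-s,k-j)$, reproduces precisely the stated shift in the subscript of $X$, the claimed powers of the ratios $f_i/f_j$, and the sign factor $(-1)^s$ (or its absence) appearing in each case. An induction on $k$ using Pascal's identity $\binom{k+1}{j}=\binom{k}{j}+\binom{k}{j-1}$ applied to the outer binomial would be a workable alternative, but the multinomial route seems cleaner since it displays the symmetry among the three terms of the recurrence in a single stroke.
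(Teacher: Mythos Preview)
The paper does not actually prove this lemma; it is quoted verbatim from \cite{adegoke18c} (Lemma~5 there) and used as a black box in the proofs of Theorems~4 and~5. So there is no ``paper's own proof'' to compare your attempt against.

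That said, your proposal is correct and is essentially the standard way to establish such identities. The key observation---that the recurrence $X_m=f_1X_{m-a}+f_2X_{m-b}+f_3X_{m-c}$, being valid for every $m$, may be read as the operator identity $I=f_1E^{-a}+f_2E^{-b}+f_3E^{-c}$ on the sequence, so that $I^k=(f_1E^{-a}+f_2E^{-b}+f_3E^{-c})^k$ expands by the multinomial theorem---is exactly what makes the ``iterate $k$ times'' step rigorous. Your reindexing $(\alpha,\beta,\gamma)=(s,j-s,k-j)$ correctly converts $\binom{k}{\alpha,\beta,\gamma}$ into $\binom{k}{j}\binom{j}{s}$ and recovers the stated subscripts and weights; I checked identity~\eqref{eq.wgx2r2f} and the fourth identity in detail and both come out as claimed. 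One minor wording issue: in the fourth identity you say the $(-1)^s$ comes from a $-1/f_3$ weight, whereas the displayed identity carries a $(-1/f_2)^s$ factor; these are of course the same once multiplied by $(f_2/f_3)^{j-s}$, but it would be cleaner to state the final regrouped weight explicitly. The remaining four identities are, as you note, just the obvious permutations and rearrangements, so the bookkeeping you describe is all that is needed.
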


Note that each of identities~\eqref{eq.wgx2r2f}--~\eqref{eq.o7540wl} can be written in the following respective equivalent form:
\begin{equation}
\sum_{j = 0}^k {\sum_{s = 0}^{k - j} {\binom kj\binom {k-j}s\left( {\frac{{f_3 }}{{f_1 }}} \right)^j \left( {\frac{{f_2 }}{{f_1 }}} \right)^s X_{m - ak - (c - a)j - (b - a)s} } }  = \frac{{X_m }}{{f_1 ^k }}\,,
\end{equation}
\begin{equation}
\sum_{j = 0}^k {\sum_{s = 0}^{k - j} {\binom kj\binom {k-j}s\left( {\frac{{f_2 }}{{f_1 }}} \right)^j \left( {\frac{{f_3 }}{{f_1 }}} \right)^s X_{m - ak - (b - a)j - (c - a)s} } }  = \frac{{X_m }}{{f_1 ^k }}\,,
\end{equation}
\begin{equation}
\sum_{j = 0}^k {\sum_{s = 0}^{k - j} {\binom kj\binom {k-j}s\left( {\frac{{f_1 }}{{f_2 }}} \right)^j \left( {\frac{{f_3 }}{{f_2 }}} \right)^s X_{m - bk - (a - b)j - (c - b)s} } }  = \frac{{X_m }}{{f_2 ^k }}\,,
\end{equation}
\begin{equation}
\sum_{j = 0}^k {\sum_{s = 0}^{k - j} {\binom kj\binom {k-j}s( - 1)^{j + s} f_3 ^j f_2 ^s X_{m + ak - cj - bs} } }  = f_1 ^k X_m \,,
\end{equation}
\begin{equation}
\sum_{j = 0}^k {\sum_{s = 0}^{k - j} {\binom kj\binom {k-j}s( - 1)^{j + s} f_3 ^j f_1 ^s X_{m + bk - cj - as} } }  = f_2 ^k X_m 
\end{equation}
and
\begin{equation}
\sum_{j = 0}^k {\sum_{s = 0}^{k - j} {\binom kj\binom {k-j}s( - 1)^{j + s} f_2 ^j f_1 ^s X_{m + ck - bj - as} } }  = f_3 ^k X_m\,. 
\end{equation}

\begin{thm}
The following identities hold for integers $n$, $m$, $r$ and non-negative integer~$k$:
\begin{equation}
\sum_{j = 0}^k {\sum_{s = 0}^{k - j} {( - 1)^{n(j + s)} \binom kj\binom {k-j}sF_n ^{j + s} G_{m - 2nk + (n + 1)j + (n - 1)s} } }  = ( - 1)^{nk} G_m\,,
\end{equation}
\begin{equation}
\sum_{j = 0}^k {\sum_{s = 0}^{k - j} {( - 1)^{n(j + s)} \binom kj\binom {k-j}sF_n ^{j + s} G_{m - 2nk + (n - 1)j + (n + 1)s} } }  = ( - 1)^{nk} G_m\,,
\end{equation}
\begin{equation}
\sum_{j = 0}^k {\sum_{s = 0}^{k - j} {( - 1)^{nj} \binom kj\binom {k-j}s\frac{{G_{m - (n + 1)k - (n - 1)j + 2s} }}{{F_n ^j }}} }  = \frac{{G_m }}{{F_n ^k }},\quad n\ne 0\,,
\end{equation}
\begin{equation}
\sum_{j = 0}^k {\sum_{s = 0}^j {( - 1)^s \binom kj\binom js\frac{{G_{m + (n + 1)k - 2j + (n + 1)s} }}{{F_n ^s }}} }  = ( - 1)^{(n + 1)k} \frac{{G_m }}{{F_n ^k }},\quad n\ne 0\,,
\end{equation}
\begin{equation}
\sum_{j = 0}^k {\sum_{s = 0}^j {( - 1)^{n(j + s) + s} \binom kj\binom js\frac{{G_{m + 2k - (n + 1)j + 2ns} }}{{F_n ^j }}} }  = ( - 1)^k G_m,\quad n\ne 0\,,
\end{equation}
and
\begin{equation}
\sum_{j = 0}^k {\sum_{s = 0}^j {( - 1)^{n(j + s) + s} \binom kj\binom js\frac{{G_{m - 2k - (n - 1)j + 2ns} }}{{F_n ^j }}} }  = ( - 1)^k G_m,\quad n\ne 0\,.
\end{equation}

\end{thm}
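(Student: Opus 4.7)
The plan is to recognise identity \eqref{eq.bazmec8} as a disguised third-order recurrence for the $G$-sequence and then feed it into Lemma~\ref{lem.h2de9i7}, exactly in the spirit of how Theorem~\ref{thm.sihtl16} reduced to Lemma~\ref{lem.binomial} via the second-order recurrence extracted from~\eqref{eq.tpyn1ql}. Concretely, after the index shift $m\to m-n$, identity~\eqref{eq.bazmec8} reads
\[
G_m = F_n\, G_{m-(n-1)} + F_n\, G_{m-(n+1)} + (-1)^n\, G_{m-2n},
\]
which is exactly the hypothesis of Lemma~\ref{lem.h2de9i7} with $X_m=G_m$ and the canonical assignment
\[
(f_1,a)=(F_n,n-1),\qquad (f_2,b)=(F_n,n+1),\qquad (f_3,c)=((-1)^n,2n).
\]

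Each of the six identities of the theorem is then produced by substituting this data into one of the six identities of Lemma~\ref{lem.h2de9i7} (or equivalently, into the six re-indexed ``equivalent forms'' listed immediately after the Lemma), possibly after first applying a permutation to the triple $(a,b,c)$. Because $f_1=f_2=F_n$, the swap $(a,f_1)\leftrightarrow(b,f_2)$ is a second legitimate assignment, and the theorem identities that differ only by the exchange $n-1\leftrightarrow n+1$ in the index of $G$ (e.g.\ the first two) arise from applying this mirror choice to a single lemma identity. The side condition $n\neq 0$ appears exactly in those theorem identities in which $F_n$ ends up in a denominator, which is necessary since $F_0=0$; no such condition is needed when $F_n$ appears only in the numerator, because then the offending $n=0$ case collapses to the trivial identity $G_m=G_m$.

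Once the matching is fixed, the remaining verification in each of the six cases is a routine calculation: compute the prescribed coefficient $(f_i/f_j)^{\bullet}$ (collecting a power of $F_n$ together with a sign $(-1)^{n\bullet}$), and assemble the index shift $-ck+(c-b)j+(b-a)s$ (or its analogue) into the linear form $\alpha k+\beta j+\gamma s$ that appears in the theorem. The main obstacle is therefore purely bookkeeping---selecting, for each of the six target identities, the correct lemma identity together with the correct permutation of $(a,b,c)$ among $\{n-1,\,n+1,\,2n\}$. No idea beyond the one already exploited in Theorem~\ref{thm.sihtl16} is required.
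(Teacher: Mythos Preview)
Your proposal is correct and matches the paper's own proof: the paper likewise rewrites identity~\eqref{eq.bazmec8} as the third-order recurrence $G_m=(-1)^n G_{m-2n}+F_n G_{m-n-1}+F_n G_{m-n+1}$ and feeds it into Lemma~\ref{lem.h2de9i7} with $X_m=G_m$. The only cosmetic difference is the labeling---the paper takes $(a,b,c)=(2n,\,n+1,\,n-1)$ and $(f_1,f_2,f_3)=((-1)^n,\,F_n,\,F_n)$ rather than your permuted choice---which, as you anticipated, is immaterial once all six lemma identities (and the $f_2=f_3$ mirror) are in play.
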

\begin{proof}
Write identity~\eqref{eq.bazmec8} as $G_{m}  = ( - 1)^n G_{m - 2n}  + F_n G_{m - n - 1}  + F_nG_{m - n + 1} $, identify $a=2n$, $b=n+1$, $c=n-1$, $f_1=( - 1)^n$, $f_2=F_n=f_3$ and use these in Lemma~\ref{lem.h2de9i7} with $X_m=G_m$.
\end{proof}
\begin{thm}
The following identities hold for integers $n$, $m$ and $r$ and non-negative integer~$k$:
\begin{equation}
\sum_{j = 0}^k {\sum_{s = 0}^j {( - 1)^{nj + s} \binom kj\binom js\frac{{G_{n + r} ^{j - s} G_n ^s }}{{G_0 ^j }}H_{m + rk + (n - r)j + rs} } }  = \left( {\frac{{G_r }}{{G_0 }}} \right)^k H_m,\quad G_0\ne 0\,, 
\end{equation}
\begin{equation}
\sum_{j = 0}^k {\sum_{s = 0}^j {( - 1)^{n(j + s) + s} \binom kj\binom js\frac{{G_0 ^{j - s} G_n ^s }}{{G_{n + r} ^j }}H_{m + nk + (r - n)j + ns} } }  = ( - 1)^{nk} \left( {\frac{{G_r }}{{G_{n + r} }}} \right)^k H_m,\quad G_{n+r}\ne 0\,,
\end{equation}
\begin{equation}
\sum_{j = 0}^k {\sum_{s = 0}^j {( - 1)^{n(j + s) + j} \binom kj\binom js\frac{{G_0 ^{j - s} G_{n + r} ^s }}{{G_n ^j }}H_{m + (n + r)k - nj + (n - r)s} } }  = ( - 1)^{(n + 1)k} \left( {\frac{{G_r }}{{G_n }}} \right)^k H_m,\quad G_n\ne 0\,, 
\end{equation}
\begin{equation}
\sum_{j = 0}^k {\sum_{s = 0}^{k - j} {( - 1)^{ns + j + s} \binom kj\binom {k-j}s\frac{{G_0 ^j G_{n + r} ^s }}{{G_r ^{j + s} }}H_{m - (n + r)k + rj + ns} } }  = ( - 1)^{(n + 1)k} \left( {\frac{{G_n }}{{G_r }}} \right)^k H_m,\quad G_r\ne 0\,,
\end{equation}
\begin{equation}
\sum_{j = 0}^k {\sum_{s = 0}^{k - j} {( - 1)^{ns + j} \binom kj\binom {k-j}s\frac{{G_0 ^j G_n ^s }}{{G_r ^{j + s} }}H_{m - nk + rj + (n + r)s} } }  = ( - 1)^{nk} \left( {\frac{{G_{n + r} }}{{G_r }}} \right)^k H_m,\quad G_r\ne 0\,,
\end{equation}
and
\begin{equation}
\sum_{j = 0}^k {\sum_{s = 0}^{k - j} {( - 1)^{n(j + s) + j} \binom kj\binom {k-j}s\frac{{G_{n + r}^j G_n ^s }}{{G_r^{j + s} }}H_{m - rk + nj + (n + r)s} } }  = \left( {\frac{{G_0 }}{{G_r }}} \right)^k H_m,\quad G_r\ne 0\,.
\end{equation}

\end{thm}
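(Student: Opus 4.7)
The plan is to mimic the proof of the preceding theorem, now with identity~\eqref{eq.llmg2cf} supplying a three-term recurrence for the $H$-sequence. Solving \eqref{eq.llmg2cf} for $H_m$ (which requires $G_r\ne 0$) produces
\[
H_m = \frac{(-1)^{n+1} G_n}{G_r} H_{m+(n+r)} + \frac{(-1)^n G_{n+r}}{G_r} H_{m+n} + \frac{G_0}{G_r} H_{m+r},
\]
which has the required form $X_m = f_1 X_{m-a} + f_2 X_{m-b} + f_3 X_{m-c}$ once we read off
\[
a=-(n+r),\quad b=-n,\quad c=-r,\quad f_1=\frac{(-1)^{n+1}G_n}{G_r},\quad f_2=\frac{(-1)^n G_{n+r}}{G_r},\quad f_3=\frac{G_0}{G_r}.
\]
I would then feed these data into Lemma~\ref{lem.h2de9i7} with $X_m=H_m$. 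Each of the six conclusions \eqref{eq.wgx2r2f}--\eqref{eq.o7540wl} (for the last three of the six identities in the theorem, using the ``equivalent forms'' displayed just below the Lemma, with $\binom{k-j}{s}$ in place of $\binom{j}{s}$) is intended to yield, in order, one of the six stated identities.

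For the verification I would treat the first identity as a template. Conclusion \eqref{eq.wgx2r2f} contributes the coefficient
\[
\left(\frac{f_2}{f_3}\right)^{\!j}\!\left(\frac{f_1}{f_2}\right)^{\!s} = \left(\frac{(-1)^n G_{n+r}}{G_0}\right)^{\!j}\!\left(\frac{-G_n}{G_{n+r}}\right)^{\!s} = (-1)^{nj+s}\,\frac{G_{n+r}^{\,j-s}G_n^{\,s}}{G_0^{\,j}},
\]
the Lemma's shift $m-ck+(c-b)j+(b-a)s$ collapses to $m+rk+(n-r)j+rs$, and $X_m/f_3^{\,k}$ reduces to $(G_r/G_0)^k H_m$, so the first stated identity is reproduced on the nose. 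The five remaining cases proceed entirely in parallel since only the roles of $(f_1,f_2,f_3)$ and of $(a,b,c)$ get permuted in the Lemma; in particular, the shifts $m+nk+(r-n)j+ns$, $m+(n+r)k-nj+(n-r)s$, $m-(n+r)k+rj+ns$, $m-nk+rj+(n+r)s$, $m-rk+nj+(n+r)s$ should drop out of the Lemma's prescriptions $m-bk+(b-c)j+(c-a)s$, $m-ak+(a-c)j+(c-b)s$, $m+ak-cj-bs$, $m+bk-cj-as$, $m+ck-bj-as$ under the same substitutions.

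The main obstacle is purely combinatorial bookkeeping. The $(-1)^n$ hidden in $f_2$ and the $(-1)^{n+1}$ hidden in $f_1$ must combine with the explicit $-1/f_i$ factors in three of the Lemma's conclusions so as to reproduce the assorted sign patterns $(-1)^{nj+s}$, $(-1)^{n(j+s)+s}$, $(-1)^{n(j+s)+j}$, $(-1)^{ns+j+s}$, $(-1)^{ns+j}$ displayed in the theorem, and the shift collapse must be verified individually in each of the six cases. A minor secondary point is that my derivation of the recurrence used $G_r\ne 0$, whereas the theorem lists only whichever weaker hypothesis ($G_0\ne 0$, $G_n\ne 0$, $G_{n+r}\ne 0$, or $G_r\ne 0$) is needed to keep the particular displayed identity finite; the stronger hypothesis $G_0G_nG_rG_{n+r}\ne 0$ required for Lemma~\ref{lem.h2de9i7} to apply can be weakened after the fact by a routine polynomial-identity continuation in the initial values, since both sides of each identity are rational in the $G_i$.
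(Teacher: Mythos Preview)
Your approach is exactly the paper's: rewrite identity~\eqref{eq.llmg2cf} as $G_rH_m=-(-1)^nG_nH_{m+n+r}+(-1)^nG_{n+r}H_{m+n}+G_0H_{m+r}$, read off $a=-(n+r)$, $b=-n$, $c=-r$, $f_1=-(-1)^nG_n/G_r$, $f_2=(-1)^nG_{n+r}/G_r$, $f_3=G_0/G_r$, and plug into Lemma~\ref{lem.h2de9i7} (using, as you note, the $\binom{k-j}{s}$ equivalent forms for the last three identities). Your verification of the first identity and your remarks on the nonvanishing hypotheses are more explicit than what the paper records, but the substance is identical.
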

\begin{proof}
Write identity~\eqref{eq.llmg2cf} as $G_r H_m  =  - ( - 1)^n G_n H_{m + n + r}  + ( - 1)^n G_{n + r} H_{m + n}  + G_0 H_{m + r} $, identify $a =  - n - r$, $b =  - n$, $c =  - r$, $f_1  =  - ( - 1)^n G_n /G$, $f_2  = ( - 1)^n G_{n + r} /G_r $, $f_3  = G_0 /G_r $ and use these in Lemma~\ref{lem.h2de9i7} with $X_m=H_m$.
\end{proof}


\begin{thebibliography}{99}

\bibitem{adegoke18} K.~Adegoke, Weighted sums of some second-order sequences, \emph{arXiv:1803.09054[math.NT]} (2018).

\bibitem{adegoke18c} K.~Adegoke, Weighted Tribonacci sums, \emph{arXiv:1804.06449[math.CA]} (2018).

\bibitem{filliponi95} P.~Filipponi, Some binomial Fibonacci identities, \emph{The Fibonacci Quarterly} {\bf 33}:3 (1995), 251--257.

\bibitem{hoggatt64} V.~ E.~Hoggatt and Jr.~and  M.~Bicknell, Some new Fibonacci identities, \emph{The Fibonacci Quarterly} {\bf 2}:1 (1964), 29--32.

\bibitem{howard03} F.~T.~Howard, The  sum  of the  squares  of two generalized  Fibonacci  numbers., \emph{The  Fibonacci  Quarterly  } {\bf 41}:1 (2003), 80--84.

\bibitem{khan11} M.~A.~Khan and H.~Kwong , Some binomial identities associated with the generalized natural number sequence, \emph{The Fibonacci Quarterly} {\bf 49}:1 (2011), 57--65.

\bibitem{layman77} J.~W.~Layman, Certain general binomial-Fibonacci sums, \emph{The Fibonacci Quarterly} {\bf 15}:4 (1977), 362--366.

\bibitem{long88} C.~T.~Long, Some binomial Fibonacci identities, \emph{Applications of Fibonacci numbers} {\bf 3} (1988), 241--254.

\bibitem{vajda} S.~Vajda, \emph{Fibonacci and Lucas numbers, and the golden section: theory and applications}, Dover Press, (2008).



\end{thebibliography}
\end{document}